\newtheorem{thm}{Theorem}[section]
\newtheorem{Thm}{Theorem}
\newtheorem{Con}{Conjecture}
\newtheorem{lem}[thm]{Lemma}
\newtheorem{pro}[thm]{Proposition}
\theoremstyle{definition}
\numberwithin{equation}{section}
\newcommand{\ex}{\mathbb{E}}
\newcommand{\re}{\textup{Re}}
\newcommand{\im}{\textup{Im}}
\newcommand{\ns}{\widetilde{S_{\chi}}}
\newcommand{\cha}{\Phi_{\chi}}
\begin{document}

\baselineskip=17pt

\title[]{The distribution of short character sums}

\author[Youness Lamzouri]{Youness Lamzouri}

\address{Department of Mathematics, University of Illinois at Urbana-Champaign,
1409 W. Green Street,
Urbana, IL, 61801
USA}

\email{lamzouri@math.uiuc.edu}

\date{}

\begin{abstract}
Let $\chi$ be a non-real Dirichlet character modulo a prime $q$. In this paper we prove that the distribution of the short character sum $S_{\chi,H}(x)=\sum_{x< n\leq x+H} \chi(n)$, as $x$ runs over the positive integers below $q$, converges to a two-dimensional Gaussian distribution on the complex plane, provided that $\log H=o(\log q)$ and $H\to\infty$ as $q\to\infty$. Furthermore, we use a method of Selberg to give an upper bound on the rate of convergence.
\end{abstract}

\subjclass[2010]{Primary 11L40; Secondary 11T24, 11N64}

\thanks{The author is supported by a Postdoctoral Fellowship from the Natural Sciences and Engineering Research Council of Canada.}

\maketitle

\section{Introduction}
Understanding the behavior of character sums is one of the central problems in analytic number theory. Let $\chi$ be a non-principal Dirichlet character modulo a large prime $q$, and define the short character sum
$$ S_{\chi, H}(x):=\sum_{x<n\leq x+H}\chi(n),$$
where $H=H(q)\leq q$. When $\chi= \left(\frac{\cdot}{q}\right)$ is the Legendre symbol modulo $q$, Davenport and Erd\"os \cite{DE} studied the distribution of $S_{\chi,H}(x)$ as $x$ runs over the positive integers below $q$, and proved that it tends to a normal distribution of mean zero and variance $H$, if $\log H=o(\log q)$ and  $H\to \infty$ as $q\to \infty$ (this restriction on $H$ makes it possible to compute the moments of $S_{\chi,H}(x)$). More precisely they showed that in this range
\begin{equation}
 \frac{1}{q}\left|\left\{0\leq x\leq q-1: \frac{S_{\chi,H}(x)}{\sqrt{H}}\leq \lambda\right\}\right|\to \frac{1}{2\pi}\int_{-\infty}^{\lambda}e^{-x^2/2}dx,
\end{equation}
as $q,H\to \infty$. However, their method does not give information on the rate of this convergence.

An analogous result to (1.1) in the case of sums of the M\"obius function in short intervals have been obtained by Ng \cite{Ng} conditionally on a variant of the Hardy-Littlewood prime $k$-tuples conjecture adapted to the case of the M\"obius function. His result states that, under this assumption, the distribution of the sum $\sum_{x<n\leq x+H}\mu(n)$ as $x$ varies over the integers below $N$ converges to a Gaussian of mean $0$ and variance $6H/\pi^2$, if $\log H=o(\log N)$ and $H \to\infty$ as $N\to \infty.$

Recently in \cite{MZ}, Mak and Zaharescu generalized  Davenport and Erd\"os approach by investigating the distribution of more general short exponential sums. Their results imply  that for any non-real character $\chi$ modulo $q$, both $\re{S_{\chi,H}(x)}$ and $\im{S_{\chi,H}(x)}$ have a limiting Gaussian distribution. However, their method does not seem to handle the joint distribution of $\re{S_{\chi, H}(x)}$ and $\im{S_{\chi, H}(x)}$.

In this paper we extend both the results of Davenport-Erd\"os and Mak-Zaharescu, namely by proving that $S_{\chi,H}(x)$ has a two-dimensional Gaussian distribution on the complex plane. Our approach relies on a method of Selberg originally applied to  prove log-normality for the values of $\zeta(1/2+it)$ (see Tsang's thesis \cite{Ts} and Selberg \cite{Se}), where $\zeta(s)$ is the Riemann zeta function.
\begin{Thm} Let $\chi$ be a non-real character modulo a large prime $q$, and let $\mathcal{R}$ be a closed rectangle in the complex plane with edges parallel to the axes. If $\log H=o(\log q)$ and $H\to\infty$ as $q\to\infty$ then
\begin{align*}
\frac{1}{q}\left|\left\{0\leq x\leq q-1: \frac{S_{\chi,H}(x)}{\sqrt{H/2}}\in \mathcal{R}\right\}\right|&= \frac{1}{2\pi}\iint_{\mathcal{R}}\exp\left(-\frac{x^2+y^2}{2}\right)dxdy\\
&+ O\left(\left(\mu_2(\mathcal{R})+1\right)\left(H^{-1/4}+ \sqrt{\frac{\log H}{\log q}}\right)\right),
\end{align*}
where $\mu_2$ stands for the two-dimensional Lebesgue measure.
\end{Thm}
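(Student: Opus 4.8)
The plan is to establish the result by the method of moments, combined with a Fourier-analytic (Beurling--Selberg) conversion to distribution functions over rectangles. Writing $\tilde S(x) = S_{\chi,H}(x)/\sqrt{H/2}$, the key quantities are the complex mixed moments
\[ M_{k,l} := \frac1q\sum_{x=0}^{q-1} S_{\chi,H}(x)^k\,\overline{S_{\chi,H}(x)}^{\,l}, \]
and the aim is to show that, in a suitable range of $k,l$, these agree with the moments of the circularly symmetric complex Gaussian of total variance $\ex|Z|^2 = H$, namely $\delta_{k,l}\,k!\,H^k$, where $\delta_{k,l}$ is Kronecker's symbol.

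First I would compute $M_{k,l}$ explicitly. Expanding each factor over the interval $(x,x+H]$ and interchanging the order of summation produces, for fixed shifts $a_1,\dots,a_k,b_1,\dots,b_l \in (0,H]$, the complete character sum $\sum_{x \bmod q}\chi\big(\prod_i(x+a_i)\big)\overline{\chi}\big(\prod_j(x+b_j)\big)$. By Weil's bound this sum is $O((k+l)\sqrt q)$ unless the rational function $\prod_i(X+a_i)/\prod_j(X+b_j)$ is, up to a constant, an $\mathrm{ord}(\chi)$-th power, which --- because the $a_i,b_j$ lie in a short interval and $\chi$ is non-real --- forces $k=l$ together with the multiset equality $\{a_i\}=\{b_j\}$. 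The resulting diagonal contributes $H(H-1)\cdots(H-k+1)\,k! = k!\,H^k\big(1+O(k^2/H)\big)$ after division by $q$, while the total off-diagonal contribution is $O\!\big((k+l)H^{k+l}/\sqrt q\big)$. Thus
\[ M_{k,l} = \delta_{k,l}\,k!\,H^k + O\!\Big(k!\,k^2 H^{k-1} + \frac{(k+l)H^{k+l}}{\sqrt q}\Big), \]
and the off-diagonal term is negligible precisely when $(k+l)\log H = o(\log q)$, which is the source of the hypothesis on $H$. A parallel computation of the auxiliary moments $\frac1q\sum_x S_{\chi,H}(x)^2$ and $\frac1q\sum_x|S_{\chi,H}(x)|^2$ confirms that $\re S_{\chi,H}$ and $\im S_{\chi,H}$ are asymptotically independent with common variance $H/2$; here the fact that $\chi^2\neq\chi_0$, valid since $\chi$ is non-real, is used to kill the $\ex[S_{\chi,H}^2]$ term.

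The second stage passes from these moments to the measure of $\{x:\tilde S(x)\in\mathcal R\}$. I would sandwich the indicator $\mathbf 1_{\mathcal R}$ between band-limited Beurling--Selberg majorants and minorants --- a product of one-dimensional extremal functions for the two sides of the rectangle, with inclusion--exclusion to handle the corners --- each supported in frequency in a box $[-T,T]^2$ and at $L^1$-distance $O(1/T)$ from $\mathbf 1_{\mathcal R}$. Integrating against the empirical characteristic function $\frac1q\sum_x e^{2\pi i(\xi\,\re\tilde S(x)+\eta\,\im\tilde S(x))}$ then reduces matters to showing that this function is close, for $|\xi|,|\eta|\le T$, to the Gaussian $e^{-2\pi^2(\xi^2+\eta^2)}$. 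Expanding the exponential as a power series in $\tilde S,\overline{\tilde S}$, inserting the moment estimates, and truncating at total degree $K\asymp\log q/\log H$ so that the Weil error stays controlled, yields this comparison with an error governed by the series tail. Balancing the cutoff $T\asymp\sqrt{\log q/\log H}$ against the truncation produces the term $\sqrt{\log H/\log q}$, while the lower-order diagonal corrections $O(k^2/H)$ together with the smoothing resolution produce the term $H^{-1/4}$; the factor $\mu_2(\mathcal R)$ arises from the $L^1$-mass of the majorants over the bulk of the rectangle, and the additive $+1$ from the edge and corner contributions.

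The main obstacle I anticipate is this last stage: unlike the marginal results of Mak--Zaharescu, the full two-dimensional joint distribution must be captured, so both the extremal-function construction and the characteristic-function estimate must be genuinely two-dimensional and uniform in $\mathcal R$. Keeping the dependence on $\mathcal R$ clean --- linear in $\mu_2(\mathcal R)$ with a bounded additive constant --- while simultaneously optimizing the two truncation parameters $T$ and $K$ against the single constraint $(k+l)\log H=o(\log q)$ is the delicate point; the moment computation itself, though lengthy, is a routine application of Weil's bound once the diagonal has been correctly identified.
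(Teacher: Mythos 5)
Your overall skeleton --- a Weil-bound moment computation, a comparison of the empirical characteristic function with a Gaussian one, and a Selberg/Beurling--Selberg smoothing to pass to rectangles --- is the same as the paper's. But there is a genuine gap at the middle stage, exactly where the paper inserts an idea your write-up does not have. You propose to prove $M_{k,l}=\delta_{k,l}\,k!\,H^k+O\bigl(k!\,k^2H^{k-1}+(k+l)H^{k+l}q^{-1/2}\bigr)$, i.e.\ to compare the moments of $S_{\chi,H}$ \emph{directly} with complex Gaussian moments, accepting a relative error $O(k^2/H)$ on the diagonal, and then to insert these into the power-series expansion of the characteristic function. When you do this, the diagonal errors are summed with absolute values: the total-degree-$2k$ terms contribute an error of order
$$\frac{(\xi^2+\eta^2)^k}{2^k\,k!}\cdot\frac{k^2}{H},$$
and summing over $k$ gives roughly $\frac{(\xi^2+\eta^2)^2}{H}\,e^{+(\xi^2+\eta^2)/2}$. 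The Gaussian damping has been lost, because in the true expansion it is produced by sign cancellation across the series, and your error terms carry no signs. With the cutoff $T\asymp\sqrt{\log q/\log H}$ that you need in order to obtain the $\sqrt{\log H/\log q}$ term from the smoothing, one has $e^{T^2/2}=e^{c\log q/\log H}$, which exceeds every fixed power of $H$ whenever $\log H=o(\sqrt{\log q})$. So on most of the range $\log H=o(\log q)$ your characteristic-function error blows up instead of producing $H^{-1/4}$, and the claimed theorem does not follow.

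The paper's remedy is an intermediate probabilistic model, and this is the missing idea: let $Z_H=X_1+\cdots+X_H$ with the $X_j$ i.i.d.\ uniform on the unit circle. Then (i) the moments of $S_{\chi,H}$ match those of $Z_H$ \emph{exactly} up to the Weil error $O\bigl((r+s)H^{r+s}q^{-1/2}\bigr)$ --- both sides have the identical diagonal count $B_k(H)$, so no $k^2/H$ correction ever enters the moment comparison (Proposition 2.2); and (ii) the characteristic function of $Z_H/\sqrt{H/2}$ is computed \emph{multiplicatively}, using independence, as the $H$-th power of a one-variable expectation, giving $\exp\bigl(-(u^2+v^2)/2\bigr)\bigl(1+O((u^4+v^4)/H)\bigr)$ (Lemma 3.2). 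Crucially, in (ii) the $1/H$ error is multiplied by the \emph{decaying} Gaussian, so integrating against the smoothing kernel costs only $O(\mu_2(\mathcal{R})/H)$ no matter how large the cutoff is, and the paper can push the smoothing parameter up to $t=\min\bigl(H^{1/4},c\sqrt{\log q/\log H}\bigr)$ to get the stated error term. Without this factorization through $Z_H$ (or an equivalent device that retains the signs/multiplicative structure), your direct moment-to-Gaussian comparison cannot reach the full range $\log H=o(\log q)$. One further caution, which you share with the paper and which I therefore do not count against you: the claim that Weil's bound applies to every term with $\{a_i\}\neq\{b_j\}$ is not literally correct for characters of small order $d$, since $\prod_i(X+a_i)\prod_j(X+b_j)^{d-1}$ is a $d$-th power whenever all multiplicities agree modulo $d$ (e.g.\ $(X+a)^3$ with $d=3$, $k=3$, $l=0$), not only when the multisets coincide.
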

We also note that our method furnishes the same bound (as in Theorem 1) for the rate of convergence of the distribution of $S_{\chi,H}(x)/\sqrt{H}$ to the standard Gaussian in (1.1), in the case where $\chi$ is the Legendre symbol modulo $q$. Moreover, it is possible to obtain an analogous result to Theorem 1 for more general short exponential sums if one combines our approach with the work of Mak and Zaharescu in \cite{MZ}.

Remark that if $\chi$ is non-real we need to normalize $S_{\chi,H}(x)$ by $\sqrt{H/2}$ while if $\chi$ is real then the normalization factor is $\sqrt{H}$ (see (1.1)). Indeed, one can show that in the latter case
$$ \frac{1}{q}\sum_{x=0}^{q-1}S_{\chi,H}(x)= o(1) \quad \text{ and } \quad \frac{1}{q}\sum_{x=0}^{q-1}\left|S_{\chi,H}(x)\right|^2=(1+o(1)) H,$$
so that $H$ is the variance of $S_{\chi,H}(x)$ in this case. On the other hand, if $\chi$ is non-real then it follows from Proposition 2.2 below that
$$ \frac{1}{q}\sum_{x=0}^{q-1}\left|\re S_{\chi,H}(x)\right|^2 = \left(\frac{1}{2}+o(1)\right)\frac{1}{q}\sum_{x=0}^{q-1}\left|S_{\chi,H}(x)\right|^2\sim \frac{H}{2}$$ and that the same estimate holds for $\im S_{\chi,H}(x)$.

In analogy with multiplicative functions, Chatterjee and Soundararajan \cite{CS} studied the distribution of random multiplicative functions $X(n)$ in short intervals. In this probabilistic model, the values of the multiplicative function at the primes $X(p)$ are independent random variables which take the values $-1$ and $1$ with probability $1/2$, and the values at all natural numbers are built out of the values at primes by the multiplicative property. Using Stein's method for normal approximation they proved that $\sum_{N< n\leq N+H}X(n)$ is approximately Gaussian if $N^{1/5}\log N\ll H=o(N/\log N)$. This random model was originally introduced in order to understand the distribution of partial sums of the M\"obius function, which explains the restriction on the lower bound of $H$ (this is connected to the problem of the existence of square-free integers in short intervals). In our case, we are concerned with Dirichlet characters, and so the corresponding probabilistic model involves random completely multiplicative functions. Therefore in this case,  Chatterjee and Soundararajan result should hold when $H=o(N/\log N)$ and $H\to\infty$ as $N\to \infty$. Motivated by this argument we formulate the following conjecture:
\begin{Con} Let $\chi$ be a non-principal character modulo a prime $q$. Suppose that $H=H(q)=o(q/\log q)$ and $H\to\infty$ as $q\to\infty$. Then, when $\chi$ is real, the distribution of $\frac{S_{\chi, H}(x)}{\sqrt{H}}$ for $0\leq x\leq q-1$ tends to a  Gaussian with mean $0$ and variance $1$. Moreover, when $\chi$ is non-real then $\frac{S_{\chi, H}(x)}{\sqrt{H/2}}$ has a limiting two-dimensional standard  Gaussian distribution on the complex plane.
\end{Con}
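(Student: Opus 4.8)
The plan is to establish convergence of all mixed moments with an explicit error term, and then to convert this into a statement about the rectangle probabilities by a two-dimensional Fourier-analytic smoothing argument in the spirit of Selberg. Write $W(x)=S_{\chi,H}(x)/\sqrt{H/2}$ and let $d\geq 3$ denote the order of $\chi$ (here $d\geq 3$ precisely because $\chi$ is non-real). The target is the circularly symmetric complex Gaussian $Z=X+iY$ with $X,Y$ independent standard normals, whose moments are $\ex[Z^k\overline{Z}^{\,\ell}]=\delta_{k\ell}\,k!\,2^{k}$; thus I aim to show
\begin{equation*}
\frac{1}{q}\sum_{x=0}^{q-1}W(x)^k\,\overline{W(x)}^{\,\ell}=\delta_{k\ell}\,k!\,2^{k}+(\text{small}),
\end{equation*}
uniformly for $k+\ell$ up to a threshold $K\asymp \log q/\log H$.

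First I would compute the raw moments. Expanding $S_{\chi,H}(x)^k\overline{S_{\chi,H}(x)}^{\,\ell}$ and interchanging the order of summation reduces the $(k,\ell)$-moment to a sum over tuples $h_1,\dots,h_k,g_1,\dots,g_\ell\in\{1,\dots,H\}$ of the complete character sums
\begin{equation*}
\frac{1}{q}\sum_{x=0}^{q-1}\chi\!\left(\frac{(x+h_1)\cdots(x+h_k)}{(x+g_1)\cdots(x+g_\ell)}\right).
\end{equation*}
By Weil's bound such a sum is $O((k+\ell)/\sqrt{q})$ unless the rational function is a perfect $d$-th power, in which case it contributes $1+O((k+\ell)/q)$. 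Since $d\geq 3$, the perfect-$d$-th-power condition forces, apart from lower-order configurations, the multisets $\{h_i\}$ and $\{g_j\}$ to coincide; this is possible only when $k=\ell$, and the number of such diagonal matchings is $k!\,H^{k}\bigl(1+O(k^2/H)\bigr)$. Collecting the off-diagonal contributions through Weil together with the sub-leading diagonal configurations should yield $\tfrac1q\sum_x S_{\chi,H}(x)^k\overline{S_{\chi,H}(x)}^{\,\ell}=\delta_{k\ell}\,k!\,H^{k}$ up to an error that remains negligible as long as $H^{k+\ell}=q^{o(1)}$, i.e.\ $k+\ell\ll \log q/\log H$. (The case $k=\ell=1$ is essentially Proposition 2.2, which fixes the normalization $\sqrt{H/2}$.) The crucial structural input is that non-realness gives $d\geq3$, which kills the would-be contributions with $\{h_i\}\neq\{g_j\}$ and thereby produces a genuinely \emph{circularly symmetric} limit --- precisely the feature that the one-variable-at-a-time approach of Mak and Zaharescu could not reach.

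Next I would pass from moments to the distribution. Form the two-dimensional Laplace transform $L(a)=\frac1q\sum_{x}\exp\bigl(aW(x)+\bar a\,\overline{W(x)}\bigr)$ for $a\in\mathbb{C}$; substituting the moment asymptotics and summing the series term by term shows $L(a)=e^{2|a|^2}\bigl(1+o(1)\bigr)$ for $|a|\leq A$ with $A\asymp\sqrt{K}$, the restriction arising from truncating the moment expansion at order $K$. Taking $a$ purely imaginary recovers the characteristic function of $(\re W,\im W)$ and shows it agrees with $e^{-(t_1^2+t_2^2)/2}$ for $|t|\leq A$. I would then feed this into a two-dimensional Esseen-type smoothing inequality, majorizing and minorizing the indicator of the rectangle $\mathcal{R}$ by (products of one-dimensional) Beurling--Selberg functions; integrating these against the empirical measure reduces everything to the already-controlled characteristic function on $|t|\leq A$. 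The Beurling--Selberg construction for a rectangle is what produces the factor $\mu_2(\mathcal{R})+1$, the $+1$ accounting for the edges, and the two error terms emerge from optimizing the construction: the truncation level $A\asymp\sqrt{\log q/\log H}$ contributes the Berry--Esseen error $1/A\asymp\sqrt{\log H/\log q}$, while the sub-leading ($O(k^2/H)$ and Weil) corrections to the moments, balanced against the smoothing scale, contribute the $H^{-1/4}$.

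I expect two places to be the main obstacles. The first is making the moment estimate fully \emph{uniform} in $k$ and $\ell$ as they grow with $q$: one must simultaneously control the combinatorics of the diagonal matchings (including all partial coincidences among the $h_i$ and $g_j$ and the constraints modulo $d$) and the accumulation of the $O((k+\ell)/\sqrt q)$ Weil errors over $H^{k+\ell}$ tuples, which is what pins the admissible range $k+\ell\ll\log q/\log H$ and hence the final rate. The second, and genuinely new, difficulty is the two-dimensional inversion step: unlike the one-dimensional setting one needs an extremal majorant and minorant adapted to a rectangle and must handle the joint law of $\re S_{\chi,H}$ and $\im S_{\chi,H}$, so the circular symmetry established in the moment computation has to be tracked carefully through the Fourier argument in order to keep the error uniform in the shape of $\mathcal{R}$.
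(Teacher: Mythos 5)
The statement you are trying to prove is stated in the paper as a \emph{conjecture}, and it remains unproven there: the paper's Theorem 1 only covers the range $\log H = o(\log q)$, and the conjecture is offered precisely because the author's method (and yours) cannot go beyond that range. Your proposal is, in essence, a reconstruction of the paper's proof of Theorem 1 --- moments computed via the Weil bound, matched against a circularly symmetric Gaussian, then converted to rectangle probabilities by a Selberg-style Fourier smoothing argument --- and as such it inherits exactly the limitation that the conjecture is designed to transcend. The gap is quantitative and fatal in the conjectured range: the Weil bound contributes an error of size $(k+\ell)H^{k+\ell}q^{-1/2}$ to the $(k,\ell)$ moment, while the diagonal main term is of size $k!\,H^{k}$ (when $k=\ell$), so the error dominates as soon as $(k+\ell)\log H \gtrsim \log q$. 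You acknowledge this by restricting to $k+\ell \ll \log q/\log H$, but you do not notice that this restriction empties the argument in the range the conjecture actually concerns: if $H = q^{1/2}$, say, then your threshold is $K \asymp 2$ and you cannot control even the fourth moment; if $H = q/(\log q)^2$, you cannot control the variance itself, since already $H^{2}q^{-1/2} \gg H$. A Gaussian limit law requires either all moments of each fixed order to converge, or a characteristic-function estimate valid on a region that grows with $q$; your method supplies neither once $\log H$ is a positive proportion of $\log q$.

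Put differently: everything in your sketch that works proves (a version of) Theorem 1, whose hypotheses are $\log H = o(\log q)$ and $H \to \infty$; nothing in it addresses the interval $q^{\varepsilon} \leq H = o(q/\log q)$, which is the entire content of the conjecture. The paper's only evidence for that range is the analogy with the random model of Chatterjee and Soundararajan, where the $\chi(n)$ are replaced by random completely multiplicative functions and Stein's method applies --- a probabilistic input with no known arithmetic substitute. Closing the gap would require a fundamentally new idea for bounding complete character sums of products $\prod_i (x+h_i)$ beyond Weil (or a way to avoid high moments altogether), not a refinement of the smoothing or of the diagonal combinatorics. One genuinely good feature of your write-up worth keeping for the range where the method does work: your explicit attention to the perfect $d$-th power configurations (multiplicities congruent mod $d$ without the multisets coinciding) is more careful than the paper's own treatment of the off-diagonal terms, and those configurations do need to be counted separately, though they turn out to be negligible since $d \geq 3$.
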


\section{Moments of short character sums}

Let $\chi$ be a non-real character modulo $q$ and $1\leq H\leq q$ be a positive integer. In order to investigate the joint distribution of $\re S_{\chi,H}(x)$ and $\im S_{\chi,H}(x)$  we shall compute the moments
$$M(r,s):= \frac{1}{q}\sum_{x=0}^{q-1}\left(\re S_{\chi,H}(x)\right)^r\left(\im S_{\chi,H}(x)\right)^s,$$
for non-negative integers $r,s$.
Mak and Zaharescu \cite{MZ} had previously computed the moments of $\re S_{\chi,H}(x)$ (and also those of $\im S_{\chi,H}(x)$) and proved that they are close to the moments of a Gaussian. In our case applying their method leads to weaker error terms. Instead,
our strategy is to introduce a more adequate probabilistic model for the character sum $S_{\chi,H}(x)$. Let $X_1,\dots, X_H$ be independent random variables uniformly distributed on the unit circle, and define
$$ Z_H:=X_1+\cdots +X_H.$$
Using the Weil bound for character sums (see Weil \cite{We}) we will prove that the moments $M(r,s)$ are very close to the corresponding moments of the random variable $Z_H$, that is to say the expectation $\ex\big((\re Z_H)^r(\im Z_H)^s\big).$ (Throughout the paper $\ex(\cdot)$ stands for the expectation of the random variable in brackets). First we require the following lemma.
\begin{lem} Let $r, s$ be non-negative integers. If $r+s$ is odd then $$\ex\big((\re Z_H)^r(\im Z_H)^s\big)=0.$$
On the other hand, if $r+s=2m$ is even then
$$ \ex\big((\re Z_H)^r(\im Z_H)^s\big)= B_m(H)\sum_{\substack{0\leq j\leq r, 0\leq k\leq s\\
j+k=m }}\frac{1}{2^r(2i)^s}\binom{r}{j}\binom{s}{k}(-1)^{s-k},$$
where $B_m(H)$ is the number of positive integers $1\leq y_1,\dots, y_m,z_1,\dots,z_m \leq H$ such that $\{y_1,\dots,y_m\}=\{z_1,\dots,z_m\}$.
\end{lem}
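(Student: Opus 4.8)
The plan is to exploit the fact that each $X_j$ lies on the unit circle, so that $\overline{X_j}=X_j^{-1}$ and the only nonvanishing moments of a single variable are the balanced ones. Writing $X_j=e^{i\theta_j}$ with $\theta_j$ uniform on $[0,2\pi)$, a direct integration gives $\ex(X_j^a\overline{X_j}^b)=\ex(e^{i(a-b)\theta_j})=\mathbf{1}_{a=b}$ for non-negative integers $a,b$. The heart of the argument is then to compute the mixed moments $\ex(Z_H^a\overline{Z_H}^b)$ and to recognize the counting function $B_m(H)$.

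First I would pass from the real and imaginary parts to $Z_H$ and $\overline{Z_H}$ via $\re Z_H=(Z_H+\overline{Z_H})/2$ and $\im Z_H=(Z_H-\overline{Z_H})/(2i)$, and expand the product by the binomial theorem:
$$(\re Z_H)^r(\im Z_H)^s=\frac{1}{2^r(2i)^s}\sum_{j=0}^r\sum_{k=0}^s\binom{r}{j}\binom{s}{k}(-1)^{s-k}\,Z_H^{j+k}\,\overline{Z_H}^{(r-j)+(s-k)}.$$
Taking expectations reduces everything to evaluating $\ex(Z_H^a\overline{Z_H}^b)$ with $a=j+k$ and $b=(r+s)-(j+k)$, so that $a+b=r+s$ throughout.

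Next I would compute $\ex(Z_H^a\overline{Z_H}^b)$ combinatorially. Expanding $Z_H^a=\sum_{(i_1,\dots,i_a)}X_{i_1}\cdots X_{i_a}$ and $\overline{Z_H}^b=\sum_{(l_1,\dots,l_b)}\overline{X_{l_1}}\cdots\overline{X_{l_b}}$ over all index tuples in $\{1,\dots,H\}$, and using independence together with the single-variable moment formula, the expectation of each individual term equals $1$ precisely when each value in $\{1,\dots,H\}$ occurs the same number of times among the $i$'s as among the $l$'s, and $0$ otherwise. This balancing condition is exactly the assertion that the multisets $\{i_1,\dots,i_a\}$ and $\{l_1,\dots,l_b\}$ coincide; in particular it forces $a=b$. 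Hence $\ex(Z_H^a\overline{Z_H}^b)=0$ unless $a=b$, and when $a=b=m$ the number of surviving pairs of tuples is by definition precisely $B_m(H)$.

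Finally I would assemble the two facts. The constraint $a=b$ becomes $j+k=(r+s)-(j+k)$, i.e.\ $2(j+k)=r+s$; when $r+s$ is odd this has no solution, so every term vanishes and the moment is $0$, giving the first assertion. When $r+s=2m$ the surviving terms are exactly those with $j+k=m$, each contributing $\ex(Z_H^m\overline{Z_H}^m)=B_m(H)$, and factoring out $B_m(H)$ yields the stated formula. I expect the only genuine subtlety to lie in the third step: correctly matching the balancing condition on the index tuples with the multiset-equality definition of $B_m(H)$, and checking that it forces $a=b$. The remaining steps are routine bookkeeping with the binomial expansion and the elementary single-variable moment computation.
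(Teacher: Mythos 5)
Your proof is correct and follows essentially the same route as the paper: the binomial expansion of $(\re Z_H)^r(\im Z_H)^s$ in terms of $Z_H$ and $\overline{Z_H}$, followed by the evaluation $\ex\big(Z_H^{a}\overline{Z_H}^{b}\big)=B_a(H)$ if $a=b$ and $0$ otherwise, using independence and the unit-circle moments. Your explicit remark that the balancing condition is multiset equality (so that, e.g., $(1,1,2)$ and $(1,2,2)$ do not match) is a worthwhile clarification of the paper's notation $\{y_1,\dots,y_m\}=\{z_1,\dots,z_m\}$, but it is not a different argument.
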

\begin{proof} We have
\begin{equation}
\begin{aligned}
\ex\big((\re Z_H)^r(\im Z_H)^s\big)&= \ex\left(\left(\frac{Z_H+\overline{Z_H}}{2}\right)^r
\left(\frac{Z_H-\overline{Z_H}}{2i}\right)^s\right)\\
&=\ex\left(\sum_{j=0}^r 2^{-r}\binom{r}{j}Z_H^j\overline{Z_H}^{r-j}\sum_{k=0}^s (2i)^{-s}\binom{s}{k}Z_H^k(-\overline{Z_H})^{s-k}\right)\\
&= \sum_{j=0}^r\sum_{k=0}^s \frac{1}{2^r(2i)^s}\binom{r}{j} \binom{s}{k}(-1)^{s-k}\ex\left(Z_H^{j+k}\overline{Z_H}^{r+s-(j+k)}\right).
\end{aligned}
\end{equation}
Moreover, notice that
\begin{equation*}
\begin{aligned}
\ex\left(Z_H^{k}\overline{Z_H}^{l}\right)&= \ex\left(\left(\sum_{1\leq n\leq H}X_n\right)^k\left(\sum_{1\leq m\leq H}\overline{X_m}\right)^l\right)\\
&=\sum_{1\leq n_1,\dots, n_k\leq H}\sum_{1\leq m_1,\dots,m_l\leq H}\ex\left(X_{n_1}\cdots X_{n_k}\overline{X_{m_1}\cdots X_{m_l}}\right).
\end{aligned}
\end{equation*}
Hence, we find
$$ \ex\left(Z_H^{k}\overline{Z_H}^{l}\right)=\begin{cases} B_k(H) & \text{ if } k=l\\
0 & \text{ otherwise}.\\
\end{cases}
$$
The result follows upon inserting this estimate in (2.1).
\end{proof}
Using this lemma we establish the following result:
\begin{pro} For any non-negative integers $r,s$ such that $H^{r+s}\leq q^{1/2}$ we have
$$M(r,s)=\ex\big((\re Z_H)^r(\im Z_H)^s\big)+ O\left(H^{r+s}(r+s)q^{-1/2}\right).$$
\end{pro}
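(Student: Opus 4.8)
The plan is to reduce $M(r,s)$ to complete multiplicative character sums and then invoke the Weil bound. First I would expand the integrand exactly as in the proof of Lemma 2.1: writing $S=S_{\chi,H}(x)$, $\re S=(S+\overline{S})/2$ and $\im S=(S-\overline{S})/(2i)$, we obtain
\[
\big(\re S_{\chi,H}(x)\big)^r\big(\im S_{\chi,H}(x)\big)^s=\sum_{j=0}^{r}\sum_{k=0}^{s}c_{j,k}\,S^{\,j+k}\,\overline{S}^{\,r+s-(j+k)},
\]
with the same coefficients $c_{j,k}=2^{-r}(2i)^{-s}\binom{r}{j}\binom{s}{k}(-1)^{s-k}$ that appear in (2.1). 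Averaging over $x$ and noting that $\sum_{j,k}|c_{j,k}|=1$, it suffices to estimate $\frac1q\sum_{x=0}^{q-1}S^{a}\overline{S}^{\,b}$ for each pair $(a,b)$ with $a+b=r+s$; recombining these through the $c_{j,k}$ will, by Lemma 2.1, reproduce $\ex\big((\re Z_H)^r(\im Z_H)^s\big)$.

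Next I would expand $S^{a}\overline{S}^{\,b}$ as an $(a+b)$-fold sum over integers in $(x,x+H]$ and substitute $n_i=x+y_i$ and $m_\ell=x+z_\ell$ with $1\le y_i,z_\ell\le H$. Interchanging the order of summation isolates, for each tuple $(y,z)$, the complete character sum $\frac1q\sum_{x\bmod q}\chi\big(\prod_i(x+y_i)\big)\,\overline{\chi\big(\prod_\ell(x+z_\ell)\big)}$. Writing $d$ for the order of $\chi$ and using $\overline{\chi(t)}=\chi(t)^{\,d-1}$, this inner sum equals $\frac1q\sum_{x}\chi\big(F_{y,z}(x)\big)$, where $F_{y,z}(x)=\prod_i(x+y_i)\prod_\ell(x+z_\ell)^{d-1}$ is a polynomial with at most $a+b\le r+s$ distinct roots modulo $q$.

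The main term should come from the diagonal tuples with $\{y_1,\dots,y_a\}=\{z_1,\dots,z_b\}$ as multisets, which forces $a=b$. For these, $F_{y,z}=\big(\prod_i(x+y_i)\big)^{d}$, so $\chi(F_{y,z}(x))=1$ apart from the at most $r+s$ values of $x$ annihilating a factor, whence $\frac1q\sum_x\chi(F_{y,z}(x))=1+O\big((r+s)/q\big)$. Counting such tuples recovers exactly $\ex\big(Z_H^{a}\overline{Z_H}^{\,b}\big)$ as in Lemma 2.1. For every remaining tuple I would apply the Weil bound in the form $\big|\sum_x\chi(f(x))\big|\le(\nu-1)\sqrt q$, where $\nu$ is the number of distinct roots of $f$, valid whenever $f$ is not a constant multiple of a $d$-th power. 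Each such sum is then $O\big((r+s)\sqrt q\big)$; summing the resulting $O\big((r+s)q^{-1/2}\big)$ over the at most $H^{r+s}$ tuples, together with the negligible diagonal errors, yields the claimed bound $O\big(H^{r+s}(r+s)q^{-1/2}\big)$.

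The step I expect to be the main obstacle is justifying the Weil hypothesis, i.e.\ deciding precisely for which tuples $F_{y,z}$ is (a constant times) a perfect $d$-th power. If $p_t$ and $q_t$ denote the multiplicities of a value $t$ among the $y_i$ and the $z_\ell$, then $(x+t)$ occurs in $F_{y,z}$ to the exponent $p_t+(d-1)q_t$, so $F_{y,z}$ is a $d$-th power precisely when $p_t\equiv q_t\pmod d$ for every $t$. This congruence condition collapses to exact multiset equality $p_t=q_t$ (the diagonal) only when $d$ exceeds all the differences $|p_t-q_t|$, which, since $0\le p_t,q_t\le r+s$, is ensured once $d>r+s$. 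Separating these genuine ``perfect power'' configurations from the oscillating tuples is the heart of the argument, and it is here that the size of the order of $\chi$ relative to $r+s$ enters; once this is settled, the Weil bound disposes of all off-diagonal contributions and the error bookkeeping above is routine.
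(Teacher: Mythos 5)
Your proposal follows exactly the paper's route up to the point where you stop: the same binomial recombination as in (2.2), the same reduction to complete sums over $x$, the same diagonal/off-diagonal split, with only the cosmetic difference that you fold $\overline{\chi}=\chi^{d-1}$ into a single polynomial $F_{y,z}$ while the paper keeps the two factors $\chi\big(\prod_i(x+y_i)\big)\overline{\chi}\big(\prod_\ell(x+z_\ell)\big)$ separate. Where you genuinely differ is that you scrutinize the hypothesis of the Weil bound: Corollary 11.24 of Iwaniec--Kowalski requires the polynomial not to be (a constant times) a perfect $d$-th power, and, as you correctly observe, off-diagonality of the multisets guarantees this only when the order $d$ of $\chi$ exceeds the relevant multiplicities, which is ensured if $d>r+s$ but not otherwise. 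You flag the case $d\le r+s$ as ``the main obstacle'' and leave it open, so as written your proposal does not prove the proposition for all non-real $\chi$: a non-real character can have order as small as $3$, and in the paper's application (the proof of Theorem 3.1) $r+s$ runs up to $4N$ with $N$ as large as $\log q/(20\log H)$, so the regime $d\le r+s$ cannot be avoided.

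You should know, however, that the obstacle you located is not a technicality you failed to finesse: it is a genuine gap in the paper itself, and indeed the proposition as stated is false for characters of small order. The paper asserts the bound (2.4) for every off-diagonal tuple with no $d$-th-power hypothesis; this fails, for instance, for $\chi$ of order $3$ and $y=(a,a,a)$, $z=(b,b,b)$ with $a\ne b$, where $\chi\big((x+a)^3\big)\overline{\chi}\big((x+b)^3\big)=1$ whenever both arguments are nonzero, so the complete sum equals $q+O(1)$ rather than being at most $6\sqrt{q}$. Concretely, for cubic $\chi$ one finds $I(3,0)=\frac1q\sum_{x}S_{\chi,H}(x)^3=H+O\big(H^3q^{-1/2}\big)$, since each of the $H$ tuples with $y_1=y_2=y_3$ contributes $1+O(1/q)$ and all remaining tuples obey Weil; feeding this into (2.2) gives $M(3,0)=H/4+O\big(H^3q^{-1/2}\big)$, whereas Proposition 2.2 combined with Lemma 2.1 (odd moments of $Z_H$ vanish) and $H^3\le q^{1/2}$ would force $M(3,0)=O(1)$. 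Thus the correct statement requires either the additional hypothesis that the order of $\chi$ exceeds $r+s$ (under which both your argument and the paper's go through verbatim), or an extra main term accounting for the ``pseudo-diagonal'' tuples in which every multiplicity difference $p_t-q_t$ is divisible by $d$. In short: you have not completed a proof of the proposition, but no completion exists in the generality claimed, and your analysis pinpoints precisely where the paper's own proof breaks down.
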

\begin{proof} First, similarly to (2.1) we derive
\begin{equation}
\begin{aligned}
M(r,s)&= \frac{1}{q }\sum_{x=0}^{q-1}\left(\frac{S_{\chi,H}(x)+\overline{S_{\chi,H}(x)}}{2}\right)^r
\left(\frac{S_{\chi,H}(x)-\overline{S_{\chi,H}(x)}}{2i}\right)^s\\
&=\frac{1}{q }\sum_{x=0}^{q-1}\sum_{j=0}^r 2^{-r}\binom{r}{j}S_{\chi,H}(x)^j\overline{S_{\chi,H}(x)}^{r-j}\sum_{k=0}^s (2i)^{-s}\binom{s}{k}S_{\chi,H}(x)^k(-\overline{S_{\chi,H}(x)})^{s-k}\\
&= \sum_{j=0}^r\sum_{k=0}^s \frac{1}{2^r(2i)^s}\binom{r}{j} \binom{s}{k}(-1)^{s-k}I\big(j+k, r+s-(j+k)\big),
\end{aligned}
\end{equation}
where
$$ I(k,l)=\frac{1}{q}\sum_{x=0}^{q-1}S_{\chi,H}(x)^k \overline{S_{\chi,H}(x)}^l.$$
Furthermore, we have
\begin{align*}
I(k,l)&=\frac{1}{q}\sum_{x=0}^{q-1}\left(\sum_{x<n\leq x+H}\chi(n)\right)^k\left(\sum_{x< m\leq x+H}\overline{\chi(m)}\right)^l\\
&= \frac{1}{q}\sum_{x=0}^{q-1}\sum_{x<n_1,\dots,n_k\leq x+H}\sum_{x< m_1,\dots,m_l\leq x+H} \chi(n_1\cdots n_k)\overline{\chi(m_1\cdots m_l)}.
\end{align*}
Writing $y_j=n_j-x$ for $1\leq j\leq k$ and $z_t=m_t-x$ for $1\leq t\leq l$, we deduce that
\begin{equation}
I(k,l)= \sum_{1\leq y_1,\dots, y_k\leq H}\sum_{1\leq z_1,\dots, z_l\leq H} \frac{1}{q}\sum_{x=0}^{q-1}\chi\Big((x+y_1)\cdots (x+y_k)\Big)\overline{\chi}\Big((x+z_1)\cdots (x+z_l)\Big).
\end{equation}
We split the set of positive integers $1\leq y_1,\dots, y_k,z_1,\dots,z_l \leq H$ into diagonal terms $\{y_1,\dots, y_k\}=\{z_1,\dots,z_l\}$ (which will give the main contribution to $I(k,l)$), and off-diagonal terms $\{y_1,\dots, y_k\}\neq\{z_1,\dots,z_l\}$. To bound the contribution of the off-diagonal terms, we use the Weil bound for character sums (more precisely Corollary 11.24 of Iwaniec-Kowalski \cite{IK}) which yields in this case
\begin{equation}\left|\sum_{x=0}^{q-1}\chi\Big((x+y_1)\cdots (x+y_k)\Big)\overline{\chi}\Big((x+z_1)\cdots (x+z_l)\Big)\right|\leq (k+l)\sqrt{q}.
\end{equation}
On the other hand, if $\{y_1,\dots, y_k\}=\{z_1,\dots,z_l\}$ then $k=l$ and
\begin{equation}
\sum_{x=0}^{q-1}\chi\Big((x+y_1)\cdots (x+y_k)\Big)\overline{\chi}\Big((x+z_1)\cdots (x+z_l)\Big)=q+O(k),
\end{equation}
since $\chi(x+y)=0$ if $x\equiv -y\bmod q$.

Therefore, in the case where $k\neq l$, all the terms in the sum on the RHS of (2.3) are off-diagonal, which in view of (2.4) gives
$$
 |I(k,l)|\leq (k+l)H^{k+l}q^{-1/2}.
$$
On the other hand, if $k=l$ then using (2.4) and (2.5) we get
$$ I(k,l)= B_k(H) \left(1+O\left(\frac{k}{q}\right)\right) + O\left(k H^{2k}q^{-1/2}\right)=B_k(H)+ O\left(k H^{2k}q^{-1/2}\right),$$
since $B_k(H)\leq H^{2k}.$
Thus, inserting these estimates in (2.2) and appealing to Lemma 2.1 we get
$$M(r,s)=\ex\big((\re Z_H)^r(\im Z_H)^s\big)+ O\left(H^{r+s}(r+s)q^{-1/2}\right),$$
as desired.
\end{proof}
\section{An asymptotic formula for the two-dimensional characteristic function of $S_{\chi,H}(x)$}

To lighten the notation, we shall define the normalized short character sum by
$$\ns(x):= \frac{S_{\chi,H}(x)}{\sqrt{H/2}}.$$ Let $\cha$ be the characteristic function of the joint distribution of $\re\ns$ and $\im\ns$, which is defined by
$$ \cha(u,v)=\frac{1}{q}\sum_{x=0}^{q-1}\exp\left(iu\re\ns(x)+iv\im\ns(x)\right).$$

The purpose of this section is to establish the following theorem, which shows that $\Phi_{\chi}(u,v)$ is very close  to the characteristic function of a two-dimensional standard Gaussian distribution. This will be the main ingredient of the proof of Theorem 1.
\begin{thm} Let $q$ be a large prime and  $N$ be a positive integer such that $N\leq \log q/(20\log H).$ Then, for any real numbers $u,v$ such that $|u|,|v|\leq H^{1/4}$ we have
\begin{equation*}
\begin{aligned}
\cha(u,v)=& \exp\left(-\frac{u^2+v^2}{2}\right)\left(1+O\left(\frac{u^4+v^4}{H}\right)\right)\\
 &+ O\left(\frac{(2u^2)^N+(2v^2)^N}{N!}+ \frac{(2uv)^{2N}}{(2N)!}+ q^{-1/4}(1+u^{2N})(1+v^{2N})\right).
 \end{aligned}
\end{equation*}

\end{thm}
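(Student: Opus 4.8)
The plan is to introduce the characteristic function of the probabilistic model $Z_H$ from Section 2,
$$\Psi(u,v):=\ex\exp\left(iu\,\frac{\re Z_H}{\sqrt{H/2}}+iv\,\frac{\im Z_H}{\sqrt{H/2}}\right),$$
to evaluate $\Psi$ exactly, and then to transfer the result to $\cha$ by matching moments through Proposition 2.2. The two halves of the argument are essentially independent: the evaluation of $\Psi$ supplies the Gaussian main term together with the error $O((u^4+v^4)/H)$, while the moment comparison contributes the remaining error terms.

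To evaluate $\Psi$ I would exploit the independence of the $X_n$. Writing $X_n=e^{i\theta_n}$ with $\theta_n$ uniform on $[0,2\pi)$, the expectation factors over $n$, and each factor equals
$$\frac{1}{2\pi}\int_0^{2\pi}\exp\left(\frac{i\sqrt{u^2+v^2}}{\sqrt{H/2}}\cos\theta\right)d\theta=J_0\left(\frac{\sqrt{u^2+v^2}}{\sqrt{H/2}}\right),$$
the Bessel function of order zero, after rotating coordinates so that $u\cos\theta_n+v\sin\theta_n=\sqrt{u^2+v^2}\cos(\theta_n-\varphi)$. Hence $\Psi(u,v)=J_0\big(\sqrt{u^2+v^2}/\sqrt{H/2}\big)^{H}$. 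Inserting the expansion $J_0(z)=1-z^2/4+O(z^4)$, valid since $|u|,|v|\le H^{1/4}$ forces the argument to be $o(1)$, I would pass to logarithms to obtain $H\log J_0(\cdots)=-\tfrac12(u^2+v^2)+O((u^4+v^4)/H)$ and then exponentiate, which yields $\Psi(u,v)=\exp(-(u^2+v^2)/2)\big(1+O((u^4+v^4)/H)\big)$; the last step uses $(u^4+v^4)/H\ll1$ to convert the exponential of the error into $1+O(\cdot)$.

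For the comparison I would factor $\exp(iu\re\ns+iv\im\ns)=\exp(iu\re\ns)\exp(iv\im\ns)$ and truncate each factor at order $2N$, using $|e^{iy}-\sum_{r<2N}(iy)^r/r!|\le|y|^{2N}/(2N)!$. Writing the product of the two truncations and averaging over $x$ gives $\sum_{0\le r,s<2N}\frac{(iu)^r(iv)^s}{r!\,s!}(H/2)^{-(r+s)/2}M(r,s)$. The three remainders left by the truncation are dominated by the even moments $\frac1q\sum_x|\re\ns|^{2N}$, $\frac1q\sum_x|\im\ns|^{2N}$ and $\frac1q\sum_x|\re\ns|^{2N}|\im\ns|^{2N}$; evaluating these by Proposition 2.2 and Lemma 2.1, and invoking the crude bounds $B_m(H)\le m!H^m$, $\binom{2m}{m}\le4^m$ and $(2N)!\ge(N!)^2$, produces exactly $\frac{(2u^2)^N+(2v^2)^N}{N!}$ and $\frac{(2uv)^{2N}}{(2N)!}$. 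On the retained range $r,s<2N$ I would apply Proposition 2.2 to replace each $M(r,s)$ by $\ex((\re Z_H)^r(\im Z_H)^s)$; this is legitimate because $H^{r+s}\le H^{4N}\le q^{1/2}$ under $N\le\log q/(20\log H)$, and the accumulated Weil error coming from (2.4), namely $q^{-1/2}\sum_{r,s<2N}\frac{|u|^r|v|^s}{r!s!}(H/2)^{-(r+s)/2}H^{r+s}(r+s)$, is controlled by the same hypothesis and bounded by $O(q^{-1/4}(1+u^{2N})(1+v^{2N}))$. The resulting main sum is precisely the order-$2N$ truncation of $\Psi$, whose tail is again bounded by the same three super-exponential terms; collecting everything gives $\cha=\Psi+O(\text{errors})$, and combining with the evaluation of $\Psi$ finishes the proof.

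The main obstacle is the control of the truncation tails. Since $|\re\ns(x)|$ can be as large as $\sqrt{2H}$, no pointwise or termwise estimate of the tails is available; the entire saving comes from balancing the super-exponential decay of $1/(2N)!$ against the growth $\ll m!H^m$ of the $2m$-th moments supplied by Lemma 2.1, which after normalization by $(H/2)^{(r+s)/2}$ collapses to quantities of the shape $(cu^2)^N/N!$. The delicate point is to arrange the bookkeeping so that all three truncation errors emerge with the stated shapes while the moment-matching error stays below $q^{-1/4}$: this is exactly why one truncates at the even order $2N$ rather than at $N$, so that only the nonnegative, explicitly computable even moments of Lemma 2.1 ever appear, and why the constraint $N\le\log q/(20\log H)$, together with the Weil bound (2.4), is needed to keep the two competing errors compatible.
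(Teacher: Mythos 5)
Your proposal is correct and follows essentially the same route as the paper: truncate the characteristic function's Taylor expansion at order $2N$, bound the three truncation remainders via the even moments using Lemma 2.1 and the bound $B_m(H)\le m!H^m$ together with $(2N)!\ge (N!)^2$, transfer $M(r,s)$ to the model moments via Proposition 2.2 (yielding the $q^{-1/4}(1+u^{2N})(1+v^{2N})$ term), un-truncate, and evaluate the model characteristic function. The only difference is cosmetic: you evaluate $\ex\big(e^{iu\re\widetilde{Z_H}+iv\im\widetilde{Z_H}}\big)$ exactly as $J_0\big(\sqrt{2(u^2+v^2)/H}\big)^H$ by rotation invariance and then log-expand, whereas the paper's Lemma 3.2 reaches the same estimate by a direct fourth-order Taylor expansion of each factor and vanishing of odd moments.
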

In order to prove this result, we shall first use Proposition 2.2 to show that $\cha(u,v)$ is  approximately equal to the characteristic function of the joint distribution of $\re\widetilde{Z_H}$  and $\im\widetilde{Z_H}$, where
$$ \widetilde{Z_H}:= \frac{Z_H}{\sqrt{H/2}}.$$
 Then, using that $Z_H=X_1+\cdots+X_H$, where the $X_j$ are independent random variables uniformly distributed on the unit circle, we shall prove that the characteristic function of the joint distribution of $\re\widetilde{Z_H}$ and $\im\widetilde{Z_H}$ is close to the characteristic function of a  two-dimensional standard Gaussian distribution in a wide range, if $H$ is large. More precisely, we have
\begin{lem} Let $u,v$ be real numbers such that $|u|,|v|\leq H^{1/4}$. Then
$$\ex\left(e^{iu\re\widetilde{Z_H}+iv\im\widetilde{Z_H}}\right)=\exp\left(-\frac{u^2+v^2}{2}\right)\left(1+O\left(\frac{u^4+v^4}{H}\right)\right).$$
\end{lem}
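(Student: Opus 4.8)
The plan is to exploit the independence of the $X_j$ to factor the characteristic function, reduce the resulting one-dimensional integral to a rotation-invariant form, and then expand to second order. Writing $X_j=e^{i\theta_j}$ with $\theta_1,\dots,\theta_H$ independent and uniform on $[0,2\pi)$, I have
$$u\re\widetilde{Z_H}+v\im\widetilde{Z_H}=\sqrt{\frac{2}{H}}\sum_{j=1}^{H}\big(u\cos\theta_j+v\sin\theta_j\big),$$
so by independence and the fact that the $\theta_j$ are identically distributed,
$$\ex\left(e^{iu\re\widetilde{Z_H}+iv\im\widetilde{Z_H}}\right)=\left[\ex\left(\exp\left(i\sqrt{\tfrac{2}{H}}\,(u\cos\theta+v\sin\theta)\right)\right)\right]^{H},$$
where $\theta$ is uniform on $[0,2\pi)$.

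Next I would invoke rotation invariance to collapse the two variables into one. Writing $u\cos\theta+v\sin\theta=\rho\cos(\theta-\phi)$ with $\rho=\sqrt{u^2+v^2}$ and a suitable phase $\phi$, and noting that $\theta-\phi$ is again uniform on $[0,2\pi)$, the inner expectation depends only on $\rho$ and equals $g(t):=\ex\!\left(e^{it\cos\theta}\right)$ with $t=\rho\sqrt{2/H}$. Expanding $g(t)=\sum_{k\geq 0}\frac{(it)^k}{k!}\ex(\cos^k\theta)$ and using that the odd moments vanish by symmetry while $\ex(\cos^2\theta)=1/2$, together with the crude bound $|\ex(\cos^k\theta)|\leq 1$ to control the tail, I obtain $g(t)=1-t^2/4+O(t^4)$ for $|t|$ small.

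Taking logarithms then gives the exponent. Since $g(t)-1=O(t^2)$, I have $\log g(t)=(g(t)-1)+O((g(t)-1)^2)=-t^2/4+O(t^4)$, whence
$$\ex\left(e^{iu\re\widetilde{Z_H}+iv\im\widetilde{Z_H}}\right)=\exp\big(H\log g(t)\big)=\exp\left(-\frac{Ht^2}{4}+O(Ht^4)\right).$$
Substituting $t^2=2\rho^2/H$ makes the main term exact, $Ht^2/4=\rho^2/2=(u^2+v^2)/2$, which produces the Gaussian factor, while the remainder becomes $Ht^4=4\rho^4/H$.

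Finally I would deploy the hypothesis $|u|,|v|\leq H^{1/4}$, which gives $\rho^2=u^2+v^2\leq 2\sqrt{H}$, so that $t^2\leq 4H^{-1/2}\to 0$ (justifying the smallness of $t$ used above) and $Ht^4=4\rho^4/H\leq 16$, i.e. the error exponent is bounded. Hence $\exp(O(Ht^4))=1+O(Ht^4)=1+O(\rho^4/H)$, and since $\rho^4=(u^2+v^2)^2\leq 2(u^4+v^4)$ this equals $1+O((u^4+v^4)/H)$, yielding the claim. The one point I expect to require care is \emph{uniformity}: one must ensure that the error terms in both the Taylor expansion of $g$ and the logarithm are controlled uniformly over the whole range $|u|,|v|\leq H^{1/4}$, which is precisely what the bound $t=O(H^{-1/4})$ guarantees; the remaining manipulations are routine bookkeeping.
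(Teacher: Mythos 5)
Your proof is correct and takes essentially the same route as the paper: factor the characteristic function into an $H$-th power by independence, expand the single-factor characteristic function to second order with a fourth-order error, and exponentiate, using the hypothesis $|u|,|v|\leq H^{1/4}$ to keep the error multiplicative. The only cosmetic differences are that you collapse the two variables via rotation invariance to the Bessel-type integral $\ex\left(e^{it\cos\theta}\right)$, whereas the paper Taylor-expands directly in $u,v$ and uses the vanishing of odd moments of $\re X_1$ and $\im X_1$ (both exploit the same circular symmetry), and that you write out explicitly the final logarithm/exponentiation bookkeeping that the paper leaves implicit in ``inserting this estimate in (3.2).''
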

\begin{proof} First, remark that
$$ \ex\left((\re X_1)^r(\im X_1)^s\right)= \frac{1}{2\pi}\int_{-\pi}^{\pi} (\cos \theta)^r(\sin \theta)^s d\theta=\frac{1}{2\pi}\int_{-\pi}^{\pi} (\sin t)^r(\cos t)^s dt,$$
by making the change of variable $t=\pi/2-\theta$ and using that $\sin\theta$ and $\cos\theta$ are periodic with period $2\pi$. Therefore since $\sin\theta$ is odd, then
\begin{equation}\ex\left((\re X_1)^r(\im X_1)^s)\right)=0,  \text{ if } r \text{ is odd or } s \text{ is odd}.
\end{equation}
Furthermore, the independence of the $X_j$ yields
\begin{equation}
\ex\left(e^{iu\re\widetilde{Z_H}+iv\im\widetilde{Z_H}}\right)= \ex\left(\exp\left(\frac{iu}{\sqrt{H/2}}\re X_1+\frac{iv}{\sqrt{H/2}}\im X_1\right)\right)^H.
\end{equation}
Now, using  (3.1) along with the fact that
$e^{ix}= 1+ix-x^2/2+(ix)^3/6+O(x^4)$ for all $x\in \mathbb{R}$, we deduce
\begin{align*}
&\ex\left(\exp\left(\frac{iu}{\sqrt{H/2}}\re X_1+\frac{iv}{\sqrt{H/2}}\im X_1\right)\right)\\
&= 1-\frac{u^2}{H}\ex\left((\re X_1)^2\right)-\frac{v^2}{H}\ex\left((\im X_1)^2\right)+ O\left(\frac{u^4+v^4+u^2v^2}{H^2}\right)\\
&= 1-\frac{u^2+v^2}{2H}+ O\left(\frac{u^4+v^4}{H^2}\right).
\end{align*}
since  $u^2v^2\leq u^4+v^4$. The result follows upon inserting this estimate in (3.2).
\end{proof}

\begin{proof}[Proof of Theorem 3.1]
Let $N$ be a positive integer such that $H^N\leq q^{1/20}.$ Using the Taylor expansion
$$ e^{ix}=\sum_{k=0}^{2N-1}\frac{(ix)^k}{k!}+ O\left(\frac{x^{2N}}{(2N)!}\right) \text{ for } x\in \mathbb{R},$$ we get
\begin{equation}
\cha(u,v)=\frac{1}{q}\sum_{x=0}^{q-1}\sum_{r=0}^{2N-1}\sum_{s=0}^{2N-1}
\frac{\left(iu\re\ns(x)\right)^r\left(iv\im\ns(x)\right)^s}{r!s!}+ E_1,
\end{equation}
where
\begin{equation}
\begin{aligned}
E_1&\ll \frac{1}{q}\sum_{x=0}^{q-1}\left(
\frac{\left(u\re\ns(x)\right)^{2N}}{(2N)!}+\frac{\left(v\im\ns(x)\right)^{2N}}{(2N)!}+
\frac{\left(uv\re\ns(x)\im\ns(x)\right)^{2N}}{(2N)!^2}\right)\\
&\ll \frac{u^{2N}2^N}{H^N(2N)!}M(2N,0)+ \frac{v^{2N}2^N}{H^N(2N)!}M(0,2N)+ \frac{(2uv)^{2N}}{H^{2N}(2N!)^2}M(2N,2N).\\
\end{aligned}
\end{equation}
Furthermore, combining Lemma 2.1 and Proposition 2.2, and noting that $B_k(H)\leq k!H^k$ we derive
\begin{equation}
M(2r,2s)\ll \ex\left((\re Z_H)^{2r}(\im Z_H)^{2s}\right)+ q^{-1/4}\ll (r+s)!H^{r+s},
\end{equation}
for any non-negative integers $r,s$ with  $H^{r+s}\leq q^{1/10}$. Hence, inserting this estimate in (3.4) and using that $(N!)^2\leq (2N)!$ we deduce
\begin{equation}
E_1\ll \frac{(2u^2)^N+(2v^2)^N}{N!}+ \frac{(2uv)^{2N}}{(2N)!}.
\end{equation}
Next we compute the main term on the RHS of (3.3). Define
\begin{align*}
\mathcal{F}_{N}(u,v)&=\frac{1}{q}\sum_{x=0}^{q-1}\sum_{r=0}^{2N-1}\sum_{s=0}^{2N-1}
\frac{\left(iu\re\ns(x)\right)^r\left(iv\im\ns(x)\right)^s}{r!s!}\\
&= \sum_{r=0}^{2N-1}\sum_{s=0}^{2N-1}\frac{(iu)^r(iv)^s}{(H/2)^{(r+s)/2}r!s!}M(r,s).
\end{align*}
Then, appealing to Proposition 2.2 we obtain
$$ \mathcal{F}_{N}(u,v)= \sum_{r=0}^{2N-1}\sum_{s=0}^{2N-1}\frac{(iu)^r(iv)^s}{r!s!}\ex\left(\left(\re\widetilde{Z_H}\right)^r\left(\im\widetilde{Z_H}\right)^s\right)+ O\left(q^{-1/4}(1+u^{2N})(1+v^{2N})\right).$$
Moreover, the main term on the RHS of the last estimate equals
\begin{equation*}
\begin{aligned}
&\ex\left(\left(\sum_{r=0}^{2N-1}\frac{(iu\re\widetilde{Z_H})^r}{r!}\right)\left(\sum_{s=0}^{2N-1}\frac{(iv\im\widetilde{Z_H})^s}{s!}\right)\right)\\
&= \ex\left(\left(e^{iu\re\widetilde{Z_H}}+O\left(\frac{(u\re\widetilde{Z_H})^{2N}}{(2N)!}\right)\right)\left(e^{iv\im\widetilde{Z_H}}+O\left(\frac{(v\im\widetilde{Z_H})^{2N}}{(2N)!}\right)\right)\right)\\
&= \ex\left(e^{iu\re\widetilde{Z_H}+iv\im\widetilde{Z_H}}\right) +O\left(\frac{(2u^2)^N+(2v^2)^N}{N!}+ \frac{(2uv)^{2N}}{(2N)!}\right),
\end{aligned}
\end{equation*}
which follows from (3.5). Thus, we infer from Lemma 3.2 that
\begin{equation*}
\begin{aligned}
\mathcal{F}_{N}(u,v)=&\exp\left(-\frac{u^2+v^2}{2}\right)\left(1+O\left(\frac{u^4+v^4}{H}\right)\right)\\
 &+ O\left(\frac{(2u^2)^N+(2v^2)^N}{N!}+ \frac{(2uv)^{2N}}{(2N)!}+ q^{-1/4}(1+u^{2N})(1+v^{2N})\right).
 \end{aligned}
\end{equation*}
Finally, combining this estimate with (3.3) and (3.6) completes the proof.
\end{proof}

\section{The distribution of $S_{\chi,H}(x)$: Proof of Theorem 1}

In order to prove Theorem 1 we shall appeal to the following Lemma of Selberg (Lemma 4.1 of \cite{Ts}), which provides a smooth approximation for the signum function. Selberg used this lemma in his proof that $\log\zeta(1/2+it)$ has a limiting two-dimensional Gaussian distribution (see \cite{Ts} and \cite{Se}).
\begin{lem}[Lemma 4.1 of \cite{Ts}]
Let $t>0$. Define
$$G(u)=\frac{2u}{\pi}+2(1-u)u\cot(\pi u) \quad \text{ for } u\in [0,1].$$
Then for all $x\in \mathbb{R}$ we have
$$ \textup{sgn}(x)=  \int_0^tG\left(\frac{u}{t}\right)\sin(2\pi i  ux)\frac{du}{u} + O\left(\left(\frac{\sin(\pi tx)}{\pi t x}\right)^2\right).$$
Moreover, $G(u)$ is differentiable and $0\leq G(u)\leq 2/\pi$ for  $u\in[0,1]$.
\end{lem}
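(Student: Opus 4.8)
The plan is to establish the approximate identity for the signum function by working with the Fourier-analytic representation of $\mathrm{sgn}(x)$ and exploiting the specific construction of $G(u)$. First I would recall the classical Fourier expansion of the sawtooth/signum function and observe that the main term $\int_0^t G(u/t)\sin(2\pi ux)\,du/u$ is designed to approximate $\mathrm{sgn}(x)$ by smoothly truncating a singular integral at height $t$. The factor $G(u/t)$ acts as a Selberg-type damping kernel: it equals $1$ at $u=0$ (so near the origin the integrand matches the exact formula for $\mathrm{sgn}$) and tapers off as $u/t\to 1$, which is precisely what controls the truncation error.

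The key computational step is to verify the two defining properties of $G$ claimed at the end: that $0\le G(u)\le 2/\pi$ on $[0,1]$ and that $G$ is differentiable there. For the differentiability I would check the behavior at the endpoints $u=0$ and $u=1$, where $\cot(\pi u)$ has poles; the factors $u$ and $(1-u)$ in front of $\cot(\pi u)$ are chosen exactly to cancel these singularities, so one expands $\cot(\pi u)=1/(\pi u)-\pi u/3+\cdots$ near $u=0$ and similarly near $u=1$ to confirm $G$ extends smoothly. The bound $0\le G(u)\le 2/\pi$ would follow from analyzing the sign and monotonicity of $G$ on $(0,1)$, again using the series for $\cot$.

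For the main error estimate, the strategy is to compute the difference between $\mathrm{sgn}(x)$ and the truncated integral by recognizing the integral as a Fejér-type smoothing. I expect that writing the exact $\mathrm{sgn}(x)$ via its principal-value Fourier integral $\frac{2}{\pi}\int_0^\infty \frac{\sin(2\pi ux)}{u}\,du$ and subtracting the truncated version produces a tail plus a correction whose size is governed by the Fejér kernel $\bigl(\frac{\sin(\pi tx)}{\pi tx}\bigr)^2$. The appearance of this squared sinc reflects that $G$ was engineered so that the leading error cancels, leaving a second-order term.

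The main obstacle will be the error-term estimate rather than the elementary properties of $G$. Carefully tracking how the choice of $G$ forces cancellation of the first-order truncation error—so that only the $O\bigl((\sin(\pi tx)/(\pi tx))^2\bigr)$ term survives—requires a delicate integration-by-parts or contour argument, and one must confirm uniformity in $x\in\mathbb{R}$ (including the behavior as $x\to 0$, where $\mathrm{sgn}$ is discontinuous but the right-hand side is continuous, and as $|x|\to\infty$, where the sinc term decays). Since this is exactly Lemma 4.1 of Tsang's thesis, I would ultimately cite \cite{Ts} and \cite{Se} for the full verification, using the structural observations above to make the statement plausible and to explain why the kernel $G$ takes this particular form.
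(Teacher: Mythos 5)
You should first note that the paper itself gives no proof of this lemma: it is imported verbatim, with attribution, as Lemma 4.1 of Tsang's thesis \cite{Ts}, so your concluding move --- deferring the actual error-term estimate to \cite{Ts} and \cite{Se} --- is exactly the paper's own treatment, and in that sense your proposal matches it. However, your motivating analysis contains a concrete error that would derail anyone executing the plan: $G(0)=2/\pi$, not $1$. Expanding $\cot(\pi u)=\frac{1}{\pi u}-\frac{\pi u}{3}-\cdots$ gives $2(1-u)u\cot(\pi u)\to 2/\pi$ as $u\to 0^{+}$ while $2u/\pi\to 0$, so $G(0)=2/\pi$; similarly one finds $G(1)=0$ (this is the tapering you describe). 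The value $2/\pi$ at the origin is precisely what makes the construction consistent, since $\textup{sgn}(x)=\frac{2}{\pi}\int_0^{\infty}\sin(2\pi u x)\,\frac{du}{u}$; if $G(0)$ were $1$, as you assert, the truncated integral would approximate $\frac{\pi}{2}\,\textup{sgn}(x)$ near the origin rather than $\textup{sgn}(x)$, contradicting the very statement you are proving. It is also why the stated bound $0\le G(u)\le 2/\pi$ is attained at $u=0$. Beyond this slip, be aware that the genuinely hard content of the lemma is not the elementary properties of $G$ but the fact that the error is exactly $O\bigl((\sin(\pi t x)/(\pi t x))^{2}\bigr)$, uniformly in $x\in\mathbb{R}$: this comes from the Beurling--Selberg extremal-function construction, of which $G$ is the Fourier-side incarnation, and a generic integration-by-parts or Fej\'er-smoothing argument applied to the truncated sine integral will not by itself yield the squared sinc with no additional first-order term. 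Since the paper also leaves that argument to \cite{Ts}, your citation legitimately closes the proof; just correct the value of $G(0)$ and the heuristic built on it.
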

Here and throughout we shall denote by $\mathbf{1}_{\alpha,\beta}$ the indicator function of the interval $[\alpha,\beta]$. Observe that
$$ \mathbf{1}_{\alpha,\beta}(x)=\frac{\textup{sgn}(x-\alpha)-\textup{sgn}(x-\beta)}{2}.$$ Moreover, define
$$ f_{\alpha,\beta}(u):=\frac{e^{-2\pi i \alpha u}-e^{-2\pi i \beta u}}{2}.$$
Then
\begin{equation}
|f_{\alpha,\beta}(u)|=\frac{1}{2}\left|\int_{2\pi \alpha u}^{2\pi \beta u} e^{-it}dt\right|\leq \pi u |\beta-\alpha|.
\end{equation}
Furthermore, it follows from Lemma 4.1 that
\begin{equation}
\mathbf{1}_{\alpha,\beta}(x)= \textup{Im} \int_0^tG\left(\frac{u}{t}\right)e^{2\pi i ux}f_{\alpha,\beta}(u)\frac{du}{u} + O\left(\frac{\sin^2(\pi t(x-\alpha))}{(\pi t (x-\alpha))^2}+ \frac{\sin^2(\pi t(x-\beta))}{(\pi t (x-\beta))^2}\right).
\end{equation}
Let $\mathcal{R}=[a,b]\times [c,d]$ be a rectangle in the complex plan, and denote by $\mathbf{1}_{\mathcal{R}}$ the indicator function of $\mathcal{R}$. Moreover, let $z=x+iy$ be a complex number. Then using the identity
\begin{equation}
\textup{Im}(w_1)\textup{Im}(w_2)= \frac{1}{2}\textup{Re}(w_1\overline{w_2}-w_1w_2),
\end{equation}
we infer from (4.2) that
\begin{equation}
\begin{aligned}
\mathbf{1}_{\mathcal{R}}(z)=& \frac12\textup{Re}\int_0^t\int_0^tG\left(\frac{u}{t}\right)G\left(\frac{v}{t}\right)\left(e^{2\pi i(ux-vy)}f_{a,b}(u)\overline{f_{c,d}(v)}-e^{2\pi i(ux+vy)}f_{a,b}(u)f_{c,d}(v)\right)\frac{du}{u}\frac{dv}{v}\\
&+ O\left(\frac{\sin^2(\pi t(x-a))}{(\pi t (x-a))^2}+ \frac{\sin^2(\pi t(x-b))}{(\pi t (x-b))^2}+\frac{\sin^2(\pi t(y-c))}{(\pi t (y-c))^2}+ \frac{\sin^2(\pi t(y-d))}{(\pi t (y-d))^2}\right).
\end{aligned}
\end{equation}
In order to prove Theorem 1 we shall require the following lemma:
\begin{lem} Let $t$ be a large positive real number. Then, uniformly for all real numbers $a<b$ we have
$$ \textup{Im}\int_{0}^tG\left(\frac{u}{t}\right)e^{-(2\pi u)^2/2}f_{a,b}(u)\frac{du}{u}
=\frac{1}{\sqrt{2\pi}}\int_a^b e^{-x^2/2}dx+O\left(\frac1t\right).$$
\end{lem}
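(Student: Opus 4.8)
The plan is to exploit the rapid Gaussian decay $e^{-(2\pi u)^2/2}$ to freeze the slowly varying weight $G(u/t)$ at its value at the origin and to extend the range of integration to all of $[0,\infty)$, after which the integral can be evaluated in closed form. First I would record the value $G(0)$: since $\cot(\pi u)\sim 1/(\pi u)$ as $u\to 0$, one has $2(1-u)u\cot(\pi u)\to 2/\pi$ while $2u/\pi\to 0$, so $G(0)=2/\pi$. Writing $G(u/t)=G(0)+\big(G(u/t)-G(0)\big)$ splits the integral into a main piece and an error piece.

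For the error piece, the differentiability of $G$ on the compact interval $[0,1]$ (with bounded derivative, via the mean value theorem) gives $|G(u/t)-G(0)|\ll u/t$ for $0\le u\le t$; combining this with the trivial bound $|f_{a,b}(u)|\le 1$ yields
$$\left|\textup{Im}\int_0^t\big(G(u/t)-G(0)\big)e^{-(2\pi u)^2/2}f_{a,b}(u)\frac{du}{u}\right|\ll \frac1t\int_0^\infty e^{-(2\pi u)^2/2}\,du\ll \frac1t,$$
uniformly in $a<b$, which supplies the claimed error term. Next I would extend the main piece $\frac{2}{\pi}\textup{Im}\int_0^t e^{-(2\pi u)^2/2}f_{a,b}(u)\frac{du}{u}$ to the half-line: using $|f_{a,b}(u)|\le 1$ and $1/u\le 1/t$ on $[t,\infty)$, the discarded tail is $\ll \frac1t\int_t^\infty e^{-(2\pi u)^2/2}\,du$, which is far smaller than $1/t$.

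It then remains to evaluate
$$\frac{2}{\pi}\,\textup{Im}\int_0^\infty e^{-(2\pi u)^2/2}f_{a,b}(u)\frac{du}{u}=\frac{1}{\pi}\int_0^\infty e^{-(2\pi u)^2/2}\big(\sin(2\pi bu)-\sin(2\pi au)\big)\frac{du}{u},$$
where I used $\textup{Im}\,f_{a,b}(u)=\tfrac12\big(\sin(2\pi bu)-\sin(2\pi au)\big)$. After the substitution $w=2\pi u$ each term becomes $\int_0^\infty e^{-w^2/2}\frac{\sin(cw)}{w}\,dw$ with $c=b$ or $c=a$; differentiating this in $c$ under the integral sign and using $\int_0^\infty e^{-w^2/2}\cos(cw)\,dw=\sqrt{\pi/2}\,e^{-c^2/2}$ shows it equals $\sqrt{\pi/2}\int_0^c e^{-s^2/2}\,ds$. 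Assembling the two terms and simplifying $\frac{1}{\pi}\sqrt{\pi/2}=\frac{1}{\sqrt{2\pi}}$ produces the main term $\frac{1}{\sqrt{2\pi}}\int_a^b e^{-x^2/2}\,dx$.

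The delicate point, and the one deserving care, is the uniformity in $a$ and $b$ of the error estimate: both the freezing of $G$ and the tail truncation must be controlled by $O(1/t)$ with a constant independent of the interval $[a,b]$. This is exactly why I use $|f_{a,b}(u)|\le 1$ rather than the sharper $|f_{a,b}(u)|\le \pi u|b-a|$ of (4.1), since the latter would introduce an unwanted dependence on $b-a$. One should also note that the integrand is genuinely integrable near $u=0$, because $f_{a,b}(u)/u\to \pi i(b-a)$ as $u\to 0$, so no singularity arises at the lower endpoint.
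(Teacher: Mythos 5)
Your proof is correct, but it follows a genuinely different route from the paper's. The paper's argument is probabilistic: it recognizes $e^{-(2\pi u)^2/2}$ as the characteristic function $\ex(e^{2\pi i u X})$ of a standard Gaussian $X$, swaps expectation and integration, and then applies the smoothing identity (4.2) pointwise in $X$, so that the integral in question equals $\ex(\mathbf{1}_{a,b}(X)) = \frac{1}{\sqrt{2\pi}}\int_a^b e^{-x^2/2}\,dx$ up to the expectation of the Fej\'er-type error terms; these are then bounded by $O(1/t)$ via the identity (4.5), which turns each of them into a short integral against the Gaussian characteristic function. That route recycles Lemma 4.1 as a black box and needs nothing about $G$ beyond $0 \le G \le 2/\pi$. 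Your route instead re-derives the result by direct computation: freezing $G$ at $G(0) = 2/\pi$, extending to the half-line, and evaluating the resulting Fourier-type integral in closed form by differentiating $\int_0^\infty e^{-w^2/2}\sin(cw)\,\frac{dw}{w}$ in $c$. What your approach buys is transparency---it makes visible why the constant comes out right (the normalization $G(0)=2/\pi$ is exactly what converts $\textup{Im} f_{a,b}$ into the Gaussian probability), and it shows the main term is exact, with all error coming from the variation of $G$ and the tail. What it costs is that you need slightly more regularity of $G$ than the paper's Lemma 4.1 literally states: ``differentiable on $[0,1]$'' does not by itself give a bounded derivative, which your mean-value-theorem step requires. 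This is not a genuine gap, since the explicit formula $G(u)=\frac{2u}{\pi}+2(1-u)u\cot(\pi u)$ extends to a function real-analytic in a neighborhood of $[0,1]$ (the poles of $\cot(\pi u)$ at $u=0,1$ are cancelled by the factors $u$ and $1-u$), so $G'$ is continuous and bounded there; but you should say this explicitly rather than lean on differentiability alone. Your attention to uniformity in $a,b$---using $|f_{a,b}(u)|\le 1$ instead of the bound (4.1) precisely where dependence on $b-a$ must be avoided---is exactly the right care, and mirrors the uniformity the paper gets for free from $\ex(\mathbf{1}_{a,b}(X))$.
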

\begin{proof}
Let $X$ be a standard Gaussian random variable. Since $\ex(e^{itX})=e^{-t^2/2}$, then
$$\textup{Im}\int_{0}^tG\left(\frac{u}{t}\right)e^{-(2\pi u)^2/2}f_{a,b}(u)\frac{du}{u}
= \ex\left(\textup{Im}\int_{0}^tG\left(\frac{u}{t}\right)e^{2\pi i u X}f_{a,b}(u)\frac{du}{u}\right).$$
On the other hand we have
$$\frac{1}{\sqrt{2\pi}}\int_a^b e^{-x^2/2}dx= P(X\in [a,b])=\ex(\mathbf{1}_{a,b}(X)).$$
Therefore, it follows from (4.2) that
\begin{align*}
&\textup{Im}\int_{0}^tG\left(\frac{u}{t}\right)e^{-(2\pi u)^2/2}f_{a,b}(u)\frac{du}{u}
- \frac{1}{\sqrt{2\pi}}\int_a^b e^{-x^2/2}dx\\
&\ll \ex\left(\frac{\sin^2(\pi t(X-a))}{(\pi t (X-a))^2}+ \frac{\sin^2(\pi t(X-b))}{(\pi t (X-b))^2}\right).
\end{align*}
To bound the RHS of the last inequality we use the following identity:
\begin{equation}
 \frac{\sin^2(\pi t x)}{(\pi tx)^2}= \frac{2(1-\cos(2\pi tx))}{t^2(2\pi x)^2}= \frac{2}{t^2}\int_0^t(t-v) \cos(2\pi xv)dv.
\end{equation}
Let $l$ be a real number. Then (4.5) yields
\begin{align*}
\ex\left(\frac{\sin^2(\pi t(X-l))}{(\pi t (X-l))^2}\right)&= \frac{2}{t^2}\textup{Re}\int_{0}^t(t-v)e^{-2\pi i l v}\ex\left(e^{2\pi i v X}\right)dv\\
&= \frac{2}{t^2}\int_{0}^t (t-v)\cos(2\pi l v)e^{-(2\pi v)^2/2}dv \ll \frac{1}{t}.
\end{align*}
This concludes the proof.
\end{proof}
We are now ready to prove Theorem 1.
\begin{proof}[Proof of Theorem 1]
Let $ N\leq \log q/(20\log H)$ be a positive integer and $1\leq t\leq \min(H^{1/4},N)$ be a real number to be chosen later. Then, using (4.4) we deduce that $\frac{1}{q}\sum_{x=0}^{q-1}\mathbf{1}_{\mathcal{R}}(\ns(x))$ equals
\begin{equation}
\begin{aligned}
&\frac12\textup{Re}\int_0^t\int_0^tG\left(\frac{u}{t}\right)G\left(\frac{v}{t}\right)\left(\cha(2\pi u, -2\pi v)f_{a,b}(u)\overline{f_{c,d}(v)}-\cha(2\pi u, 2\pi v)f_{a,b}(u)f_{c,d}(v)\right)\frac{du}{u}\frac{dv}{v}\\
&+ O\Big(I_{\chi}(t,a)+ I_{\chi}(t,b)+J_{\chi}(t,c)+J_{\chi}(t,d)\Big),
\end{aligned}
\end{equation}
where
$$I_{\chi}(t,l)= \frac{1}{q}\sum_{x=0}^{q-1}\frac{\sin^2\big(\pi t(\textup{Re}\ns(x)-l)\big)}{(\pi t(\textup{Re}\ns(x)-l))^2},$$
and
$$ J_{\chi}(t,l)= \frac{1}{q}\sum_{x=0}^{q-1}\frac{\sin^2\big(\pi t(\textup{Im}\ns(x)-l)\big)}{(\pi t(\textup{Im}\ns(x)-l))^2}.$$
We begin by estimating the main term in (4.6). First note that (4.1) implies
$$
 \frac{|f_{a,b}(u)f_{c,d}(v)|}{uv}\leq \pi^2\mu_2(\mathcal{R}).
$$
Therefore, using this inequality and appealing to Theorem 3.1 we find that the integral in (4.6) equals
\begin{equation}
\begin{aligned}
&\frac12\textup{Re}\int_0^t\int_0^tG\left(\frac{u}{t}\right)G\left(\frac{v}{t}\right)
\exp\left(-\frac{(2\pi u)^2+(2\pi v)^2}{2}\right)\left(f_{a,b}(u)\overline{f_{c,d}(v)}-f_{a,b}(u)f_{c,d}(v)\right)\frac{du}{u}\frac{dv}{v}\\
&+ O\left(\mu_2(\mathcal{R})\left(\frac{1}{H}+ \frac{(4\pi t)^{2N+2}}{N!}+ \frac{(4\pi t)^{4N+2}}{(2N)!}+q^{-1/4}(4\pi t)^{4N+2}\right)\right).
\end{aligned}
\end{equation}
Now, using (4.3) along with Lemma 4.2 we deduce that the main term of (4.7) equals
\begin{equation}
\begin{aligned}
&\int_0^t\int_0^tG\left(\frac{u}{t}\right)G\left(\frac{v}{t}\right)
\exp\left(-\frac{(2\pi u)^2+(2\pi v)^2}{2}\right)\textup{Im}f_{a,b}(u)\textup{Im}{f_{c,d}(v)}\frac{du}{u}\frac{dv}{v}\\
&= \frac{1}{2\pi}\int_a^b\int_c^d\exp\left(-\frac{u^2+v^2}{2}\right)dudv + O\left(\frac1t\right).
\end{aligned}
\end{equation}
Next, we bound the contribution of the error term in (4.6). Using the identity (4.5) we derive
\begin{equation*}
\begin{aligned}
I_{\chi}(t,l)&= \frac{1}{q}\sum_{x=0}^{q-1}\int_0^t \frac{2(t-v)}{t^2}\cos\Big(2\pi v\big(\textup{Re}\ns(x)-l\big)\Big)dv\\
&= \frac{1}{q}\textup{Re}\sum_{x=0}^{q-1}\int_0^t \frac{2(t-v)}{t^2}e^{-2\pi ivl} \exp\Big(2\pi iv\textup{Re}\ns(x)\Big)dv\\
&= \textup{Re}\int_0^t \frac{2(t-v)}{t^2}e^{-2\pi ivl} \cha(2\pi v,0)dv.
\end{aligned}
\end{equation*}
Hence, it follows from Theorem 3.1 that
\begin{equation}
\begin{aligned}
I_{\chi}(t,l)&\ll \frac{1}{t}\int_{0}^t e^{-(2\pi v)^2/2}\left(1+O\left(\frac{v^4}{H}\right)\right)dv + (4\pi t)^{2N}\left(\frac{1}{N!}+q^{-1/4}\right)\\
&\ll \frac{1}{t}+ (4\pi t)^{2N}\left(\frac{1}{N!}+q^{-1/4}\right).
\end{aligned}
\end{equation}
A similar bound for $J_{\chi}(t,l)$ can be obtained along the same lines. Therefore, combining the estimates (4.6)-(4.9) we deduce
\begin{equation*}
\frac{1}{q} \sum_{x=0}^{q-1}\mathbf{1}_{\mathcal{R}}(\ns(x))= \frac{1}{2\pi}\iint_{\mathcal{R}}\exp\left(-\frac{x^2+y^2}{2}\right)dxdy +O(E_2),
\end{equation*}
where
$$E_2=\left(\mu_2(\mathcal{R})+1\right)\left(\frac{1}{t}+ \frac{(4\pi t)^{2N+2}}{N!}+ \frac{(4\pi t)^{4N+2}}{(2N)!}+q^{-1/4}(4\pi t)^{4N+2}\right).
$$
We choose $$ t= \min \left(H^{1/4}, \frac{1}{60\pi}\sqrt{\frac{\log q}{\log H}}\right),$$ and  $N=[(8\pi t)^2].$ Hence, Stirling's formula yields
$$ \frac{(4\pi t)^{2N+2}}{N!}+ \frac{(4\pi t)^{4N+2}}{(2N)!}\ll \frac{1}{t^2},$$
which implies
$$E_2 \ll \left(\mu_2(\mathcal{R})+1\right)\left(H^{-1/4}+ \sqrt{\frac{\log H}{\log q}}+ q^{-1/4}(4\pi t)^{(20\pi t)^2}\right).$$
Finally, noting that 
$$ q^{-1/4}(4\pi t)^{(20\pi t)^2}\leq q^{-1/4}(4\pi H)^{\log q/(9\log H)}\ll q^{-1/8},$$
concludes the proof.
\end{proof}

\end{document}